\newtheorem{Teo}{Theorem}[section]
\newtheorem{Prop}[Teo]{Proposition}
\newtheorem{Lema}[Teo]{Lemma}
\newtheorem{Cor}[Teo]{Corollary}
\theoremstyle{definition}
\newtheorem{Def}[Teo]{Definition}
\newtheorem{Obs}[Teo]{Remark}
\newtheorem{Exa}[Teo]{Example}
\newtheorem{Pro}[Teo]{Problem}
\newcommand{\R}{\mathbb{R}}
\newcommand{\C}{\mathbb{C}}
\newcommand{\F}{\mathbb{F}}
\newcommand{\lra}{\longrightarrow}
\newcommand{\MI}{\mathfrak{m}}
\newcommand{\QI}{\mathfrak{q}}
\newcommand{\QF}{\mbox{\rm Quot}}
\begin{document}
\title{Topologies on spaces of valuations: a closeness criterion}
\author{Josnei Novacoski}
\address{Josnei Novacoski\newline\indent CAPES Foundation  \newline \indent Ministry of Education of Brazil \newline \indent Bras\'ilia/DF 70040-020 \newline \indent Brazil}
\email{jan328@mail.usask.ca}

\thanks{During part of the realization of this project the author was supported by a graduate research fellowship at University of Saskatchewan. During the other part the support came from a grant from the program ``Ci\^encia sem Fronteiras" from the Brazilian government.}

\keywords{Spaces of valuations}
\subjclass[2010]{Primary 13A18; Secondary 06F05, 06F20}
\begin{abstract}
This paper is part of a program to understand topologies on spaces of valuations. We fix an ordered abelian group $\Gamma$ and an integral domain $R$. We study the relation between a topology on $\Gamma_\infty$ and the induced topology on the set $\mathcal W_\Gamma$ of all valuations of $R$ taking values in $\Gamma_\infty$. For instance, we give a criterion for $ \mathcal W_\Gamma$ to be closed in $\left(\Gamma_\infty\right)^R$. We also discuss the effect of this criterion for natural topologies on $\Gamma_\infty$.
\end{abstract}

\maketitle
\section{Introduction}
Topologies on spaces of valuations appear in many different settings. The most common of them is the Zariski topology. It was introduced by Zariski in the first half of the twentieth century and it has been intensively studied since then. Initially, it was defined as a topology on algebraic varieties, but in a modern language it is defined as a topology on the spectrum of a ring $A$, i.e., the set of all prime ideals of $A$. The space of Krull valuations on a ring (or valuations on a field) admits a natural structure as inverse limit of spectra of rings with their respective Zariski topologies. The corresponding topology is called again the Zariski topology on the space of valuations.

An important application of this topology comes on the problem of resolution of singularities of an algebraic variety. This is known as the Zariski's approach for resolution of singularities. It consists in first proving that every valuation on the function field of the given variety admits local uniformization. Such uniformization holds for an open set on the Zariski topology. This means that, by compactness, in order to resolve all singularities, it is enough to glue only finitely many local solutions. This approach has been very successful. For instance, Zariski proved resolution of singularities for algebraic varieties of dimension up to three in characteristic zero (see \cite{Zar_4}). Another example is the recent work of Cossart and Piltant on resolution of singularities for threefolds. They proved resolution of singularities for any threefold over a field of positive characteristic (see \cite{Cos_2} and \cite{Cos_3}).

A variation of the Zariski topology is the so called the patch topology. This is defined as the patch (or constructive) topology induced by the Zariski topology. It is the coarsest compact and Hausdorff topology finer than the Zariski topology. The Zariski topology and patch topologies are constructed in the set of Krull valuations of a ring. Their definitions cannot be extended directly to the set of all valuations on this ring. To overcome this problem, Huber and Knebusch introduced the valuation spectrum topology (see \cite{Kne}). In the appendix one can find a more detailed discussion as well as the formal definitions.

Berkovich spaces (\cite{Ber}) also appear as an alternative theory. An example of that is the valuative tree, developed by Favre and Jonsson in \cite{Fav_1}. The valuative tree consists of all (normalized and centered) valuations in $\C[[x,y]]$ taking values in $\R$. Favre and Jonsson give a tree structure to this space. These tree structure generates different topologies on this set. Again, we present a discussion on this topic in the appendix.

These topologies defined above are compact. The proofs of their compactness usually follow the following classical argument: we see $\mathcal W$ as a subset of $X^I$ where $X$ is a Hausdorff and compact space and $I$ is an indexing set. By Tychonoff theorem, $X^I$ is also Hausdorff and compact. Then it is enough to prove that $\mathcal W$ is closed in $X^I$.

Motivated by this fact, we consider the following problem. Fix a ring $R$ and an ordered abelian group $\Gamma$. Then the set $\mathcal W_\Gamma$ of all valuations of $R$ taking values in $\Gamma_\infty$ is naturally a subset of $\left(\Gamma_\infty\right)^R$. Given a topology $\mathfrak A$ on $\Gamma_\infty$, when is $\mathcal W_\Gamma$ closed (with respect to the product topology) in $\left(\Gamma_\infty\right)^R$? We prove the following:

\begin{Teo}\label{topvaltheo}
Let $\Gamma'$ be any submonoid of $\Gamma_\infty$ and take a topology $\mathfrak A$ on $\Gamma'$ such that
\begin{description}
\item[(P1)] the addition $+:\Gamma'\times\Gamma'\lra\Gamma'$ is continuous, and
\item[(P2)] for every $\gamma,\gamma'\in\Gamma'$ such that $\gamma<\gamma'$ there exist open sets $U,U'\in\mathfrak A$ such that $\gamma\in U,\gamma'\in U'$ and $U< U'$ (i.e., $u<u'$ for every $u\in U$ and $u'\in U'$).
\end{description}
Then the set $\mathcal{W}_{\Gamma'}$ of valuations of $R$ taking values in $\Gamma'$ is closed in $\left(\Gamma'\right)^{R}$.
\end{Teo}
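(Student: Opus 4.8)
The plan is to exhibit $\mathcal{W}_{\Gamma'}$ as an intersection of closed subsets of $\left(\Gamma'\right)^{R}$, one for each of the defining conditions of a valuation, using that each evaluation map $\pi_x\colon\left(\Gamma'\right)^{R}\lra\Gamma'$, $v\mapsto v(x)$, is continuous for the product topology. First I would record the key consequence of (P2): the topology $\mathfrak A$ is Hausdorff. Indeed, given $\gamma\neq\gamma'$ in $\Gamma'$, totality of the order on $\Gamma_\infty$ lets me assume $\gamma<\gamma'$, and the sets $U,U'$ produced by (P2) are disjoint, since any $u\in U\cap U'$ would force $u<u$. In particular $\Gamma'$ is $T_1$, so singletons are closed.

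Next I would dispatch the algebraic axioms. The normalizations $v(1)=0$ and $v(0)=\infty$ cut out $\pi_1^{-1}(\{0\})$ and $\pi_0^{-1}(\{\infty\})$, which are closed because points are closed and the projections are continuous. For multiplicativity I fix $x,y\in R$ and compare the two maps $\left(\Gamma'\right)^{R}\lra\Gamma'$ given by $v\mapsto v(xy)$ and $v\mapsto v(x)+v(y)$. The first is the projection $\pi_{xy}$; the second is $+\circ(\pi_x,\pi_y)$, which is continuous since $(\pi_x,\pi_y)$ is continuous into $\Gamma'\times\Gamma'$ and the addition is continuous by (P1). As $\Gamma'$ is Hausdorff, the equalizer $\{v\sep v(xy)=v(x)+v(y)\}$ is closed, and intersecting over all $x,y\in R$ preserves closedness.

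The main obstacle is the ultrametric inequality $v(x+y)\geq\min\{v(x),v(y)\}$, which is exactly where (P2) does the real work; here I would show that the complement is open. Fix $x,y$ and suppose $v_0(x+y)<\min\{v_0(x),v_0(y)\}$, writing $a=v_0(x+y)$, $b=v_0(x)$, $c=v_0(y)$, so that $a<b$ and $a<c$. Applying (P2) to each inequality yields open sets $U_a\ni a$, $U_b\ni b$ with $U_a<U_b$, and $V_a\ni a$, $V_c\ni c$ with $V_a<V_c$. Then the basic open set $\{v\sep v(x+y)\in U_a\cap V_a,\ v(x)\in U_b,\ v(y)\in V_c\}$ contains $v_0$ and lies entirely in the complement: for any such $v$ we have $v(x+y)\in U_a$ and $v(x)\in U_b$, whence $v(x+y)<v(x)$ because $U_a<U_b$, and symmetrically $v(x+y)<v(y)$. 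Thus each set $\{v\sep v(x+y)\geq\min\{v(x),v(y)\}\}$ is closed.

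Finally, $\mathcal{W}_{\Gamma'}$ is the intersection over all $x,y\in R$ of the closed sets produced above, hence closed in $\left(\Gamma'\right)^{R}$. The only points that will need care are the precise axiom list for a valuation adopted in this paper (in particular the role of $\infty$, and whether $\infty\in\Gamma'$ is needed for $v(0)=\infty$) and the verification that no additional normalization condition, possibly non-closed, is built into the definition of the elements of $\mathcal{W}_{\Gamma'}$.
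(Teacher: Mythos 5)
Your proposal is correct and is essentially the paper's argument in dual packaging: the paper shows the complement of $\mathcal{W}_{\Gamma'}$ is open by cases on which axiom fails, while you write $\mathcal{W}_{\Gamma'}$ as an intersection of closed sets, using the same three ingredients in the same way --- \textbf{(P2)} implies Hausdorff, continuity of addition \textbf{(P1)} plus Hausdorff separation for \textbf{(V1)} (your equalizer fact is exactly the paper's explicit neighbourhood construction), and the two-fold application of \textbf{(P2)} for \textbf{(V2)}. Your closing caveat about $\infty\in\Gamma'$ is harmless: if $\infty\notin\Gamma'$ then $\mathcal{W}_{\Gamma'}=\emptyset$ is closed trivially, and the paper's definition of $\mathcal{W}_{\Gamma'}$ imposes no normalization beyond \textbf{(V1)}--\textbf{(V3)}.
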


\begin{Obs}
We introduce the submonoid $\Gamma'$ of $\Gamma_\infty$, and prove Theorem \ref{topvaltheo} for this case, because in many situations we will study valuations which take values in a specific submonoid. For instance, the values of centered valuations (see Section \ref{prelim}) lie in the submonoid $\left(\Gamma^{\geq 0}\right)_\infty$ of $\Gamma_\infty$.
\end{Obs}

\begin{Obs}
The theorem above obviously holds for the case where the trivial valuation is the only valuation in $R$ (i.e., if $R$ is an algebraic extension of $\F_p$ for some prime number $p$). However, we will always assume that this is not the case.
\end{Obs}
We can ask for the converse of Theorem \ref{topvaltheo}, namely: can we find conditions on $\mathfrak A$ which imply that $\mathcal{W}_{\Gamma'}$ is not closed in $(\Gamma')^R$? The next proposition answers this question for the case $\Gamma'=\Gamma_\infty$.

\begin{Prop}\label{Proptop}
Take any topology $\mathfrak A$ on $\Gamma_\infty$. If the set of all valuations on $R$ taking values in $\Gamma_\infty$ is closed in $(\Gamma_\infty)^R$ with respect to the product topology, then $\mathfrak A$ is $T_1$.
\end{Prop}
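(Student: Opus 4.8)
The plan is to argue by contraposition. Assuming that $\mathfrak A$ is not $T_1$, I will produce a point of $(\Gamma_\infty)^R$ that lies in the closure of $\mathcal W_{\Gamma_\infty}$ but is not a valuation, contradicting the hypothesis that $\mathcal W_{\Gamma_\infty}$ is closed. Recall that a space fails to be $T_1$ precisely when some singleton is not closed; thus there exist $\gamma,\delta\in\Gamma_\infty$ with $\gamma\neq\delta$ and $\gamma$ lying in every open set that contains $\delta$, i.e.\ $\delta\in\overline{\{\gamma\}}$. Fix such a pair.

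The key observation concerns constant nets in the product topology, where convergence is coordinatewise. Suppose $v\in\mathcal W_{\Gamma_\infty}$ is a valuation and $r_0\in R$ is an element with $v(r_0)=\gamma$. Define $f\in(\Gamma_\infty)^R$ by $f(r_0)=\delta$ and $f(r)=v(r)$ for all $r\neq r_0$. Then the constant net with value $v$ converges to $f$: in every coordinate $r\neq r_0$ the value is unchanged, while in the coordinate $r_0$ every open set containing $f(r_0)=\delta$ contains $v(r_0)=\gamma$ by our choice of $\gamma$ and $\delta$. Hence $f\in\overline{\mathcal W_{\Gamma_\infty}}$, and the proof reduces to choosing $v$ and $r_0$ so that $f$ is not a valuation.

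To break a valuation axiom I would distinguish cases according to $\gamma$. If $\gamma=\infty$, take $r_0=0$; since $v(0)=\infty$ for every valuation while $f(0)=\delta\neq\infty$, the function $f$ violates $v(0)=\infty$. If $\gamma=0$, take $r_0=1$; since $v(1)=0$ while $f(1)=\delta\neq0$, the function $f$ violates $v(1)=0$. Finally, if $\gamma\in\Gamma\setminus\{0\}$ and some valuation $v$ attains the value $\gamma$ at an element $x$, take $r_0=x$. As $x\neq 0,1$ we have $x^2\neq x$, so $f(x^2)=v(x^2)=2\gamma$ is unchanged, whereas $f(x)=\delta$; the multiplicativity identity would force $f(x^2)=2f(x)=2\delta$, and $2\gamma\neq 2\delta$ because an ordered abelian group is torsion-free. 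Thus $f$ is not multiplicative, as desired.

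The step I expect to be the main obstacle is guaranteeing, in this last case, that the value $\gamma$ is actually taken by some valuation of $R$; the cases $\gamma\in\{0,\infty\}$ are automatic because these two values occur at $1$ and $0$ for every valuation, but a general $\gamma\in\Gamma\setminus\{0\}$ need not a priori lie in the image of any valuation. Here I would invoke the standing assumption that $R$ carries nontrivial valuations: starting from a nontrivial valuation with a nonzero value and composing it with an order-preserving homomorphism of its value group into $\Gamma$ that sends a suitable generator to $\gamma$, one obtains a valuation attaining $\gamma$. Making this realization precise --- and in particular checking that it can be arranged for every relevant $\gamma$ --- is the delicate part of the argument, and it is where the interplay between the group structure of $\Gamma$ and the supply of valuations on $R$ is essential.
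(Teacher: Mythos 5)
Your proposal is, modulo notation, the paper's own proof: the same contrapositive (a non-closed singleton yields $\gamma\neq\delta$ such that every open set containing $\delta$ contains $\gamma$), the same modification of a valuation $v$ at the single coordinate $r_0$ with $v(r_0)=\gamma$ (so that every open set containing $f$ contains $v$), the same three cases $\gamma=\infty$, $\gamma=0$, $\gamma\in\Gamma\setminus\{0\}$, and the same squaring argument $f(r_0^2)=2\gamma\neq 2\delta=2f(r_0)$ (where, besides torsion-freeness, you should also record the trivial subcase $\delta=\infty$). Moreover, the step you single out as the main obstacle is exactly the step the paper leaves unproved: immediately before its proof, the paper ``observes'' without argument that, since $R$ is not an algebraic extension of $\F_p$, every $\gamma\in\Gamma\setminus\{0\}$ is of the form $\nu(a_0)$ for some valuation $\nu:R\lra\Gamma_\infty$ and some $a_0\in R$.

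Your worry about that step is justified: it is a genuine gap, and the repair you sketch cannot be carried out. An order-preserving homomorphism never reverses signs, and there are domains on which every valuation is non-negative: for $R=\mathbb{Z}$, axiom \textbf{(V2)} applied to $n=1+\cdots+1$, together with $\nu(1)=0$ and $\nu(-1)=0$ (the latter because $2\nu(-1)=\nu(1)=0$ and $\Gamma$ is torsion-free), forces $\nu(n)\geq 0$ for every $n\in\mathbb{Z}$ and every valuation $\nu$; hence for $\Gamma=\mathbb{Z}$ no valuation of $R=\mathbb{Z}$ attains $\gamma=-1$. Even when signs are compatible, the desired homomorphism need not exist: every valuation on the field $\overline{\mathbb{Q}}$ has divisible value group, and a divisible group has no nonzero homomorphism into $\mathbb{Z}$, so the only valuation of $\overline{\mathbb{Q}}$ with values in $\mathbb{Z}_\infty$ is the trivial one. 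Worse, no alternative argument can close the gap at this level of generality, because the Proposition itself fails for $R=\Gamma=\mathbb{Z}$: declare $U\subseteq\mathbb{Z}_\infty$ open if and only if $-1\in U$ implies $-2\in U$. This is a topology and it is not $T_1$ (the singleton $\{-2\}$ is not closed). Yet $\mathcal{W}_\Gamma$ is closed in $(\mathbb{Z}_\infty)^{\mathbb{Z}}$: all values of valuations of $\mathbb{Z}$ lie in $\mathbb{Z}_{\geq 0}\cup\{\infty\}$, every such value has an open singleton, and the smallest open set containing $-1$ is $\{-1,-2\}$, which contains no such value; hence any $f$ in the closure of $\mathcal{W}_\Gamma$ takes values in $\mathbb{Z}_{\geq 0}\cup\{\infty\}$ and coincides with some valuation on every finite set of coordinates (apply this to $\{a,b,ab\}$, $\{a,b,a+b\}$ and $\{0,1\}$), so $f$ is itself a valuation. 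The upshot is that the realization of an arbitrary nonzero $\gamma$ as a value of a valuation is an additional hypothesis on the pair $(R,\Gamma)$ --- it holds, for instance, for $R=k[x]$ and any $\Gamma$, using the $x$-adic and the degree valuations composed with $n\mapsto n\gamma$ and $n\mapsto -n\gamma$ respectively --- and not a consequence of the paper's standing assumptions; your proof needs it, and, although it does not say so, so does the paper's.
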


It is easy to show that if \textbf{(P2)} holds, then $\mathfrak A$ is finer than the order topology and that both \textbf{(P1)} and \textbf{(P2)} hold for the order topology. On the other hand, if $\mathfrak A$ is not $T_1$, then the set of all valuations on $R$ taking values in $\Gamma_\infty$ is not closed in $\left(\Gamma_\infty\right)^R$. A natural question is whether the property of being $T_1$ characterizes the order topology among all the topologies $\mathfrak A$ coarser than the order topology. In Section \ref{proofsresults} we present an example that answers this question to the negative. Namely, we construct a topology on $\Gamma_\infty$ strictly coarser than the order topology which is $T_1$ (in fact, this topology is normal).

This paper is divided as follows. In Section \ref{prelim} we present the basic definitions that will be needed in the sequel. In Section \ref{proofsresults} we present the proofs of Theorem \ref{topvaltheo} and Proposition \ref{Proptop}. In the last section we present some natural topologies in $\Gamma_\infty$ and discuss whether, for each of these topologies, the set $\mathcal W_\Gamma$ is closed in $\left(\Gamma_\infty\right)^R$. We also include an appendix where we present the definitions and comparisons between the known topologies on spaces of valuations.

\par\medskip
\textbf{Acknowledgements.} The author is thankful to Franz-Viktor Kuhlmann for his guidance during the realization of this project. His careful reading and the posterior comments and corrections made this work more consistent and precise.

\section{Preliminaries}\label{prelim}
Take a commutative ring $R$ with unity and an ordered abelian group $\Gamma$. We denote by $\Gamma_\infty$ to the abelian group $\Gamma\cup\{\infty\}$ where $\infty$ is a symbol not belonging to $\Gamma$. We extend addition and order to $\infty$ is as usual.

\begin{Def}
A \index{Valuation}\textbf{valuation} on $R$ taking values in $\Gamma_\infty$ is a mapping $\nu:R\lra \Gamma_\infty$ with the following properties:
\begin{description}
\item[(V1)] $\nu(ab)=\nu(a)+\nu(b)$ for all $a,b\in R$.
\item[(V2)] $\nu(a+b)\geq \min\{\nu(a),\nu(b)\}$ for all $a,b\in R$.
\item[(V3)] $\nu(1)=0$ and $\nu(0)=\infty$.
\end{description}
\end{Def}

The subgroup of $\Gamma$ generated by
\[
\{\nu(a)\mid a\in R\mbox{ and } \nu(a) \neq \infty\}
\]
is called the \index{Value group!of a valuation}\textbf{value group of $\nu$} and is denoted by $\nu R$. The valuation $\nu$ is called \index{Valuation!trivial}\textbf{trivial} if $\nu R = \{0\}$. The set $\mathfrak q_\nu: = \nu^{-1}(\infty)$ is a prime ideal of $R$, called the \index{Valuation!support of }\textbf{support of $\nu$}.

\begin{Def}
A valuation $\nu:R\lra \Gamma_\infty$ is a \index{Valuation!Krull}\textbf{Krull valuation} if $\mathfrak q_\nu=\{0\}$.
\end{Def}

We denote by $\mathcal{W}$ the class of all valuations on $R$. For a fixed ordered abelian group $\Gamma$, we denote by $\mathcal W_\Gamma$ to the set of all valuations taking values in $\Gamma_\infty$. Also, we denote by $\mathcal{V}$ and $\mathcal V_\Gamma$ the subset of $\mathcal{W}$ and $\mathcal V_\Gamma$, respectively, consisting of all Krull valuations. Given a valuation $\nu$ on $R$, we can define a Krull valuation
\[
\overline{\nu}:R/\mathfrak q_\nu\lra \Gamma_\infty
\]
by setting $\overline{\nu}(\overline{a})=\nu(a)$, where $\overline a$ denotes the reduction of $a$ modulo $\QI_\nu$.

\begin{Def}\label{equival}
Two valuations $\nu$ and $\mu$ on $R$ are said to be equivalent (and denote by $\nu\sim\mu$) if one (and hence all) of the following equivalent conditions are satisfied:
\begin{description}
\item[i)] For all $a,b \in R,\ \nu(a) > \nu(b)$ if and only if $\mu(a) >\mu(b)$.
\item[ii)] There is an order-preserving isomorphism $f:\nu R\lra \mu R$ such that $\mu=f\circ \nu$.
\item[iii)] The ideals $\mathfrak q_\nu$ and $\mathfrak q_\mu$ are equal and for any $\overline a/\overline b\in \QF(R/\QI_\nu)=\QF(R/\QI_\mu)$ we have that $\overline\nu(\overline a/\overline b)\geq 0$ if and only if $\overline\mu(\overline a/\overline b)\geq 0$.
\end{description}
\end{Def}

\begin{Obs}
If $\nu$ and $\mu$ are two real valued valuations (i.e., $\nu,\mu\in\mathcal W_\R$), then $\nu\sim \mu$ if and only if $\nu=C \cdot\mu$ for some $C\in \R$ and $C>0$.
\end{Obs}

Take a local ring $(R,\MI)$ and $\nu$ a valuation on $R$. We will say that $\nu$ is \index{Valuation! centered}\textbf{centered} at $R$ if $\nu(a)\geq 0$ for all $a\in R$ and $\nu(a)>0$ for all $a\in \MI$. If in addition $R$ is noetherian, then $\MI$ is finitely generated, so we can define
\[
\nu(\MI):=\min \{\nu(a)\mid a\in \MI\}.
\]
In this case, we denote by $\mathcal W^{\geq 0}$ and $\mathcal V^{\geq 0}$ the subsets of $\mathcal W$ and $\mathcal V$, respectively, consisting all centered valuations on $R$.

\section{Proofs of the main results}\label{proofsresults}
In this section we present the proofs of Theorem \ref{topvaltheo} and Proposition \ref{Proptop}.

\begin{proof}[Proof of Theorem \ref{topvaltheo}]
We will prove that $\left(\Gamma'\right)^R\setminus\mathcal{W}_{\Gamma'}$ is an open set in the product topology. Take a function $f:R\lra\Gamma'$ which is not a valuation. Then one of the three axioms \textbf{(V1)}, \textbf{(V2)} or \textbf{(V3)} does not hold for $f$. We will treat each case separately.

If \textbf{(V1)} does not hold, then $f(ab)\neq f(a)+f(b)$ for some $a,b\in R$. Property \textbf{(P2)} implies that $\mathfrak A$ is Hausdorff, so there exist $U,W\in\mathfrak A$ such that $f(a)+f(b)\in U$, $f(ab)\in W$ and $U\cap W=\emptyset$. By \textbf{(P1)} there exist $V,V'\in\mathfrak A$ with $f(a)\in V$ and $f(b)\in V'$ such that $V+V'\subseteq U$. For an element $a\in R$ we define the map $\pi_a:\left(\Gamma'\right)^{R} \lra \Gamma'$ by $\pi_a(f):=f(a)$.
Take the open set given by
\[
O=\pi_a^{-1}(V)\cap\pi_b^{-1}(V')\cap\pi_{ab}^{-1}(W).
\]
It is easy to see that $f\in O$. Take any element $g\in O$ and let us prove that $g$ is not a valuation. Since $g(a)\in V$ and $g(b)\in V'$ we must have $g(a)+g(b)\in V+V'\subseteq U$. On the other hand, $g(ab)\in W$. This means that $g(ab)\neq g(a)+g(b)$ because $U\cap W=\emptyset$. Hence, $g$ is not a valuation.

If \textbf{(V2)} does not hold, then $f(a+b)< \min\{f(a),f(b)\}$ for some $a,b\in R$. In this case we have that $f(a+b)< f(a)$ and $f(a+b)< f(b)$. By Property \textbf{(P2)} we have that there exist open sets $U,U',W,W'\in\mathfrak A$ such that $f(a+b)\in U<W\ni f(a)$ and $f(a+b)\in U'<W'\ni f(b)$. Take now
\[
O=\pi_a^{-1}(W)\cap\pi_b^{-1}(W')\cap\pi_{a+b}^{-1}(U\cap U').
\]
Again we have that $f\in O$. If $g\in O$ we have that $g(a+b) <\min\{g(a),g(b)\}$ which implies that $g$ is not a valuation.

Finally, if \textbf{(V3)} does not hold, then $f(1)\neq 0$ or $f(0)\neq \infty$. Assume that $f(1)\neq 0$. Since $\mathfrak A$ is Hausdorff the set $\Gamma_\infty\setminus\{0\}$ is open. Take the set $O=\pi_1^{-1}(\Gamma_\infty\setminus\{0\})$. Then $f\in O$ and $O\cap \mathcal{W}_{\Gamma'}=\emptyset$. The case of $f(0)\neq\infty$ is treated analogously.
\end{proof}

Before proving Proposition \ref{Proptop}, we observe that since $R$ is not an algebraic extension of $\F_p$ for some prime number $p$, for every $\gamma\in\Gamma\setminus\{0\}$, there exists a valuation $\nu:R\lra \Gamma_\infty$ and $a_0\in R$ such that $\nu(a_0)=\gamma$.

\begin{proof}[Proof of Proposition \ref{Proptop}]
Suppose $\mathfrak A$ is not $T_1$ and let us prove that there exists $f\in (\Gamma_\infty )^R$ with the following property: for any open set $O\subseteq \left(\Gamma_\infty\right)^R$ if $f\in O$, then $O\cap \mathcal W_\Gamma\neq\emptyset$.

Since $\mathfrak A$ is not $T_1$ there exist elements $\gamma,\gamma'\in\Gamma_\infty,\gamma\neq\gamma'$ such that for every $U\in\mathfrak A$, if $\gamma'\in U$, then $\gamma\in U$. If $\gamma=0$ , then we just take a valuation $\nu$ and define the function
\begin{displaymath}
f(a)=
\left\{\begin{array}{l}
\nu(a)\ ,\textnormal{ if } a \neq 1\\
\gamma'\ \ \ \   ,\textnormal{ if } a =1.
\end{array}\right.
\end{displaymath}
It is easy to see that $f$ is not a valuation and that every open set that contains $f$ contains $\nu$. The case $\gamma=\infty$ is treated analogously.

If $\gamma\neq 0$ and $\gamma\neq\infty$ we take a valuation $\nu:R\lra\Gamma_\infty$ such that $\nu(a_0)=\gamma$ for some $a_0\in R$. Define now the function
\begin{displaymath}
f(a)=\left\{\begin{array}{l}
\nu(a)\ ,\mbox{ if } a \neq a_0 \\
\gamma'\ \ \ \ ,\mbox{ if } a =a_0.
\end{array}
\right.
\end{displaymath}
Since $a_0^2\neq a_0$ we have that
\[
f(a_0^2)=\nu(a_0^2)=2\nu(a_0)=2\gamma\neq 2\gamma'=2f(a_0).
\]
Hence, $f$ is not a valuation. Take any open set $O$ in the product topology such that $f\in O$. For $a\in R$, if $f(a)\neq \nu(a)$, then $f(a)=\gamma$ and $\nu(a)=\gamma'$. Therefore, $\nu\in O$ which is what we wanted to prove.
\end{proof}

\section{Topologies on $\Gamma_\infty$}

We start this section by presenting some natural topologies on $\Gamma_\infty$. Since $\Gamma$ is a totally ordered set, it carries a topology induced by the order. There are (at least) three natural ways to extend this topology to $\Gamma_\infty$.

\begin{Def}\label{Ordtopo}
For a totally ordered set $X$, the \textbf{order topology} is defined as the topology generated by the sets of the form
\[
\{x\in X\mid x>x_0\}\mbox{ and }\{x\in X\mid x<x_0\}
\]
where $x_0$ runs through $X$. We denote by $X_\infty$ the set $X\cup\{\infty\}$ where $\infty$ is an element not belonging to $X$ and extend the order from $X$ to $X_\infty$ by setting $\infty>x$ for every $x\in X$. In this manner, $X_\infty$ is a totally ordered set and hence we can talk about the order topology on $X_\infty$. A system of open neighbourhoods of $\infty$ in this topology is given by
\[
\{x\in X_\infty\mid x>x_0\},
\]
with $x_0$ running through $X$.
\end{Def}

\begin{Def}
Take a totally ordered set $X$ and an element $y\notin X$. Define the \textbf{circle topology} on $X'=X\cup\{y\}$ as follows: consider the order topology on $X$ and extend it to $X'$ by taking
\begin{equation}\label{opbascirtop}
\{y\}\cup\{x\in X\mid (x<x_0)\vee (x>x_1)\}=\{y\}\cup X\setminus [x_0,x_1],
\end{equation}
as a system of open neighbourhoods of $y$, where $x_0$ and $x_1$ run through $X$.
\end{Def}

\begin{Def}
Take any topological space $(X,\mathfrak A)$. The \textbf{one point compactification of $(X,\mathfrak A)$} is the topological space given by $(X',\mathfrak A')$ where $X'=X\cup\{y\}$ with $y\notin X$ and
\[
\mathfrak A'=\mathfrak A\cup\{\{y\}\cup \left(X\setminus K\right)\mid K\textnormal{ is closed and compact in }(X,\mathfrak A)\}.
\]
\end{Def}

\begin{Def}
We define the topologies $\mathfrak A_1$, $\mathfrak A_2$ and $\mathfrak A_3$ on $\Gamma_\infty$ as the order, the circle and the one point compactification topologies, respectively. Here the topology on $\Gamma$ is the order topology and we set $\infty$ to be the element denoted by $y$ in the previous definitions.
\end{Def}

\begin{Obs}
Since the order topology on $\Gamma$ is Hausdorff, the open sets of $\mathfrak A_3$ which contain $\infty$ are $\{\infty\}\cup \left(X\setminus K\right)$ where $K$ is a compact subset of $\Gamma$ (in a Hausdorff space every compact set is closed).
\end{Obs}

\begin{Lema}
For an ordered abelian group $\Gamma$, the order topology on $\Gamma$ is discrete if and only if $\Gamma$ has a smallest positive element.
\end{Lema}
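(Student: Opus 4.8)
The plan is to exploit the homogeneity of the order topology under the group operation, reducing the entire question to the behaviour of a single open neighbourhood of $0$.

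First I would observe that for each fixed $g\in\Gamma$ the translation $x\mapsto x+g$ is an order-preserving bijection of $\Gamma$, and hence a homeomorphism for the order topology, since it carries each subbasic ray $\{x>a\}$ or $\{x<b\}$ to a subbasic ray of the same form. As it sends $\{0\}$ to $\{g\}$, the singleton $\{g\}$ is open if and only if $\{0\}$ is open. Because a topology is discrete exactly when every singleton is open, this shows that the order topology on $\Gamma$ is discrete if and only if $\{0\}$ is open. Thus the statement reduces to the equivalence: $\{0\}$ is open if and only if $\Gamma$ has a smallest positive element. (Here I assume $\Gamma\neq\{0\}$, so that $\Gamma$, being an infinite ordered group, has neither a largest nor a smallest element; the trivial group is vacuously discrete and is the excluded degenerate case.)

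Next I would identify the basic neighbourhoods of $0$. Since $\Gamma$ has no extremal elements, finite intersections of the subbasic rays show that the basic open neighbourhoods of $0$ are precisely the intervals $(a,b)=\{x\mid a<x<b\}$ with $a<0<b$. Hence $\{0\}$ is open if and only if some such interval equals $\{0\}$, that is, there exist $a<0<b$ with no element strictly between $a$ and $0$ and none strictly between $0$ and $b$. To handle the two directions I would use the order-reversing automorphism $x\mapsto -x$, which links the behaviour just below $0$ to the behaviour just above it. For the backward implication: if $\Gamma$ has a smallest positive element $\varepsilon$, then $-\varepsilon$ is the largest negative element, and I claim $(-\varepsilon,\varepsilon)=\{0\}$, because any $x$ with $0<x<\varepsilon$ contradicts minimality of $\varepsilon$, while any $x$ with $-\varepsilon<x<0$ yields $0<-x<\varepsilon$, the same contradiction; so $\{0\}$ is open. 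Conversely, if $\{0\}$ is open, pick $a<0<b$ with $(a,b)=\{0\}$; then $b>0$, and any $c$ with $0<c<b$ would lie in $(a,b)=\{0\}$, forcing $c=0$, so $b$ is the smallest positive element.

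The only delicate point, more a matter of care than a genuine obstacle, is the description of the neighbourhood basis: I must confirm that a subbasic neighbourhood of $0$ cannot be an unbounded ray, which is exactly where the nontriviality of $\Gamma$ (and hence its lack of extremal elements) is used. Once that is settled, the proof is a direct translation between ``nothing lies strictly between $0$ and the next element above it'' and ``existence of a smallest positive element,'' with the reflection $x\mapsto -x$ transferring the corresponding statement from one side of $0$ to the other.
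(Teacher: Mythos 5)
Your proof is correct and takes essentially the same approach as the paper's: both rest on the observation that the interval $\left]\beta-\alpha,\beta+\alpha\right[$ collapses to $\{\beta\}$ when $\alpha$ is the smallest positive element, and conversely that an interval neighbourhood of $0$ contained in $\{0\}$ produces a smallest positive element. Your appeal to translation homeomorphisms simply packages the translation computation the paper carries out by hand, and your extra care with the trivial group and with ruling out unbounded rays as neighbourhoods tightens points the paper's proof glosses over.
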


\begin{proof}
If $\Gamma$ has a smallest positive element $\alpha$, then $\left]0,\alpha\right[=\emptyset$. Then $\left]\alpha,2\alpha\right[=\emptyset$, because if $\alpha_1\in\left]\alpha,2\alpha\right[$ we would have $\alpha_1-\alpha\in\left]0,\alpha\right[$. Therefore, $\{\alpha\}=\left]0,2\alpha\right[$ is open in the order topology. For any element $\beta\in \Gamma$ we have that $\{\beta\}=\left]\beta-\alpha,\beta+\alpha\right[$ is open in the order topology. Indeed, if
\[
\beta'\in\left]\beta-\alpha,\beta+\alpha\right[\textrm{ and }\beta\neq\beta'
\]
we would have that
\[
\beta'-\beta+\alpha\in\left]0,2\alpha\right[\textrm{ and }\beta'-\beta+\alpha\neq \alpha
\]
which is a contradiction. Therefore, the order topology is discrete.

Suppose that the order topology is discrete. This implies that $\{0\}$ is an open set, hence there exist $\alpha>0$ and $\beta<0$ such that
\[
\{\gamma\in\Gamma\mid \beta<\gamma<\alpha\}\subseteq \{0\}.
\]
Therefore, $\alpha$ is the smallest positive element for $\Gamma$.
\end{proof}

\begin{Def}
A totally ordered set $X$ is said to be \index{Complete!totally ordered set}\textbf{complete} if every non-empty set bounded from above admits a supremum.
\end{Def}

\begin{Prop}\label{12}
Take a totally ordered set $X$. We have the following:
\begin{description}
\item[(i)] If the order topology on $X_\infty$ is compact, then $X$ has a smallest element.

\item[(ii)] The order topology on $X_\infty$ is finer than or equal to the circle topology on $X_\infty$. Moreover, these topologies are equal if and only if $X$ has a smallest element.

\item[(iii)] The circle topology on $X_\infty$ is finer than or equal to the one point compactification on $X_\infty$. Moreover, they are equal if and only if $X$ is complete.

\end{description}
\end{Prop}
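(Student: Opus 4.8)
The plan is to observe that all three topologies restrict to the order topology on $X$ and differ only through the neighbourhoods they assign to $\infty$. Concretely, in each of them a set not containing $\infty$ is open exactly when it is order-open in $X$, while a set containing $\infty$ is open exactly when its trace on $X$ is order-open and it contains a basic neighbourhood of $\infty$ of the relevant type: $\{x\in X_\infty\mid x>x_0\}$ for the order topology, $\{\infty\}\cup(X\setminus[x_0,x_1])$ for the circle topology, and $\{\infty\}\cup(X\setminus K)$ with $K$ compact for the one point compactification. Every comparison then reduces to comparing these three neighbourhood bases at $\infty$. For (i) I would argue by contraposition: if $X$ has no smallest element then the sets $\{x\in X_\infty\mid x>x_0\}$, $x_0\in X$, form an open cover of $X_\infty$ (each $x\in X$ lies above some $x_0$, and $\infty$ lies in all of them), yet any finite subfamily has union $\{x\mid x>x_0\}$ with $x_0$ the least of the finitely many bounds, which misses $x_0$; hence $X_\infty$ is not compact.

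For (ii), the inclusion is immediate from the reduction, since a basic circle neighbourhood $\{\infty\}\cup(X\setminus[x_0,x_1])$ equals $\{x\mid x<x_0\}\cup\{x\in X_\infty\mid x>x_1\}$, a union of two order-open sets. If $X$ has a least element $m$, then taking $x_0=m$ makes $[m,x_1]=\{x\mid x\le x_1\}$, so this circle neighbourhood becomes the order neighbourhood $\{x\in X_\infty\mid x>x_1\}$; thus the two bases at $\infty$ coincide and the topologies are equal. If $X$ has no least element, I would show the order-open set $\{x\in X_\infty\mid x>a\}$ is not circle-open: any basic circle neighbourhood of $\infty$ contains all elements below its lower endpoint, and these are never all $>a$ (as $X$ has no least element), so it cannot sit inside $\{x\in X_\infty\mid x>a\}$.

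For (iii), the inclusion rests on the claim that every compact $K\subseteq X$ lies in some closed interval $[x_0,x_1]$; granting it, $X\setminus[x_0,x_1]\subseteq X\setminus K$ exhibits each basic one point compactification neighbourhood of $\infty$ as circle-open. This boundedness follows by covering $K$ with the order-open sets $\{x\mid x<c\}$, $c\in X$ (a cover when $X$ has no greatest element), extracting a finite subcover, and arguing symmetrically from below; the cases where $X$ has a greatest or least element are immediate. For the equality I would prove that $X$ is complete if and only if every closed interval $[x_0,x_1]$ is compact. If $X$ is complete, a Heine--Borel-style supremum argument — let $c=\sup\{x\in[x_0,x_1]\mid[x_0,x]\text{ has a finite subcover}\}$ and show that $c=x_1$ belongs to this set — gives compactness, so every basic circle neighbourhood of $\infty$ is a one point compactification neighbourhood. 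If $X$ is not complete, a nonempty bounded set with no supremum yields a gap $X=L\cup U$ with $L<U$, $L$ without greatest and $U$ without least element; fixing $x_0\in L$, $x_1\in U$, the order-open cover of $[x_0,x_1]$ by the sets $\{x\mid x<l\}$ ($l\in L$) and $\{x\mid x>u\}$ ($u\in U$) has no finite subcover, so $[x_0,x_1]$ is not compact, and being closed it lies in no compact set; hence $\{\infty\}\cup(X\setminus[x_0,x_1])$ is circle-open but not open in the one point compactification.

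The step I expect to be the main obstacle is the equality clause of (iii), which amounts to a Heine--Borel theorem for arbitrary linearly ordered sets. Both directions need care in the order topology: the supremum argument must handle the possibility of jumps in $X$, and in the non-complete case one must verify that a Dedekind gap really produces a cover of $[x_0,x_1]$ admitting no finite subcover. Everything else — parts (i) and (ii), the inclusion in (iii), and the boundedness of compact sets — is routine once the comparison has been reduced to the neighbourhood bases at $\infty$.
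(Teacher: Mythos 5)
Your proposal is correct, and for most of the statement it follows the same route as the paper: the reduction of all comparisons to neighbourhood bases at $\infty$, the cover $\{x\in X_\infty\mid x>x_0\}$, $x_0\in X$, in (i), both halves of (ii), boundedness of compact sets for the inclusion in (iii), and the supremum argument with a case split on immediate successors for the complete case are all exactly the paper's steps. The one genuine divergence is the non-complete case of (iii). The paper uses a set $S$ without supremum to build a cover of the \emph{whole space} $X_\infty$ by circle-basic sets $\{\infty\}\cup X\setminus[x_0,x_1]$ (with $x_0\in S$ and $x_1$ an upper bound of $S$) admitting no finite subcover, concluding that the circle topology is not compact and hence cannot equal the one point compactification --- an indirect argument that silently relies on the (standard, but unproved there) fact that the one point compactification is always compact. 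You instead localize the same combinatorial idea: the Dedekind gap $X=L\cup U$ yields an open cover of a single interval $[x_0,x_1]$ with no finite subcover, so $[x_0,x_1]$ is not compact and, being closed, lies in no compact set; hence $\{\infty\}\cup(X\setminus[x_0,x_1])$ is circle-open but not open in the one point compactification. Your version is slightly sharper: it exhibits an explicit distinguishing open set, needs no appeal to compactness of the one point compactification, and organizes all of (iii) around the clean equivalence ``$X$ is complete if and only if every interval $[x_0,x_1]$ is compact.'' Finally, two details you plan to include are elided or incomplete in the paper, so keep them: the proof that a compact $K\subseteq X$ is bounded (the paper only asserts it), and the verification that the supremum $c=x_1$ itself lies in the finitely-coverable set --- the paper stops after deriving a contradiction from $c<x_1$, which by itself does not show that $[x_0,x_1]$ admits a finite subcover.
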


\begin{proof}
\textbf{(i)} Suppose that $X$ does not have smallest element. Then the family $\{U_x\}_{x\in X}$, with $U_x=\left]x,\infty\right]$, is an open covering of $X_\infty$ in the order topology. Also, for every finite subfamily $\{U_{x_i}\}_{1\leq i\leq n}$ of $\{U_x\}_{x\in X}$, we have that
\[
X_\infty\neq U_{x_0}=\bigcup_{i=1}^nU_{x_i},
\]
where $\displaystyle x_0=\min_{1\leq i\leq n}x_i$. Therefore, the order topology is not compact.

\textbf{(ii)} Since both topologies extend the order topology of $X$, we just have to consider neighbourhoods of $y$. A subbasic open neighbourhood of $\infty$ in the circle topology is of the form $U_1\cup U_2$ where
\[
U_1=\{x\in X_\infty\mid x<x_0\}\mbox{ and }U_2=\{x\in X_\infty\mid x>x_1\},
\]
for some $x_0,x_1\in X$. Since both $U_1$ and $U_2$ are open sets in the order topology, so is $U_1\cup U_2$.

Assume now that $X$ has smallest element $x'$. Then every subbasic open neighbourhood of $\infty$ in the order topology is of the form
\[
\{x\in X_\infty\mid x>x_0\}=\{x\in X_\infty\mid x<x'\}\cup\{x\in X_\infty\mid x>x_0\},
\]
which is open in the circle topology. On the other hand, if $X$ does not have a smallest element, then for $x_0\in X$ the set $U=\{x\in X_\infty\mid x>x_0\}$ is open in the order topology, but not in the circle topology.

\textbf{(iii)} Take any open subset $U$ of $X_\infty$ in the one point compactification. For every $x\in U$ we have to show that there exists an open set $V$ in the circle topology such that $x\in V\subseteq U$. Since the case is trivial for $x\neq \infty$ we suppose that $x=\infty$ and $U=\{\infty\}\cup \Gamma\setminus K$ such that $K$ is compact in $X$. Since $K$ is compact, it must be bounded, i.e., $K\subseteq \left[x_0,x_1\right]$ for some $x_0,x_1\in X$. Therefore, 
\[
\infty\in V:=\{\infty\}\cup X\setminus \left[x_0,x_1\right]\subseteq\{\infty\}\cup X\setminus K=U.
\]

Assume now that $X$ is not complete. Then there exists a non-empty subset $S$ of $X$, bounded from above but without a supremum. Hence, for every $x$ such that $x\geq S$ there exists $x_1\in X$ such that $S\leq x_1<x$. Define the following family of open sets in the circle topology:
\[
\mathcal{F}=\{U_{x_0}^{x_1}\mid x_0\in S\mbox{ and }S\leq x_1\},\mbox{ where }U_{x_0}^{x_1}=\{\infty\}\cup X\setminus [x_0,x_1].
\]
Take an element $x\in X$. If $x\geq S$, then there exists $x_1\in X$ such that $S\leq x_1<x$. Then
$x\in U_{x_0}^{x_1}$ for any $x_0\in S$. If $x\ngeq S$ there exists $x_0\in S$ such that $x< x_0$. Again, $x\in U_{x_0}^{x_1}$ for each $x_1\geq S$. Therefore,
\[
X_\infty=\bigcup_{x_0\in S,x_1\geq S}U_{x_0}^{x_1}=\bigcup\mathcal F.
\]
It is easy to see that for every finite subfamily $\mathcal F'$ of $\mathcal F$ we have that $X_\infty\neq \bigcup\mathcal F'$. Therefore, the circle topology is not compact and hence distinct from the one point compactification.

It remains to show that if $X$ is complete, then the one point compactification and the circle topologies are equal. In view of (\ref{opbascirtop}), it is enough to show that every subset of the form $\left[x_0,x_1\right]$ is compact in $X$ with respect to the order topology. Take any open covering $\{U_i\}_{i\in I}$ of $\left[x_0,x_1\right]$ and consider the set
\[
\mathcal S=\left\{x\in \left[x_0,x_1\right]\mid \exists i_1,\cdots,i_n\in I\textnormal{ such that }\left[x_0,x\right]\subseteq\bigcup_{j=1}^n U_{i_j}\right\}.
\]
This set is non-empty and bounded, so it admits a supremum $x'$. Since $\mathcal S\leq x_1$ we must have $x'\leq x_1$. Suppose towards a contradiction that $x'<x_1$. If $x'$ has an immediate successor $x''$, then we take any $i_{n+1}\in I$ such that $x''\in U_{i_{n+1}}$. Consequently,
\[
\left[x_0,x''\right]=[x_0,x']\cup\{x''\}\subseteq\bigcup_{j=1}^{n+1} U_{i_j}
\]
which implies that $x''\in \mathcal S$. This is a contradiction with $x'=\sup \mathcal S$. If $x'$ does not have an immediate successor we take any $i_{n+1}\in I$ such that $x'\in U_{i_{n+1}}$. Since $U_{i_{n+1}}$ is open in the order topology there exists $x''>x'$ such that
\[
\left[x_0,x''\right]\subseteq \bigcup_{j=1}^{n+1} U_{i_j},
\]
which gives the desired contradiction.
\end{proof}

Using the proposition above and the well-known fact that every complete ordered abelian group is isomorphic to the real numbers we obtain the following:

\begin{Cor}
We have that $\mathfrak A_3\subseteq\mathfrak A_2\subsetneq\mathfrak A_1$. Moreover, $\mathfrak A_3=\mathfrak A_2$ if and only if $\Gamma$ is isomorphic to the real numbers.
\end{Cor}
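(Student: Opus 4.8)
The plan is to obtain the two inclusions directly from Proposition~\ref{12}, applied with $X=\Gamma$ and with $\infty$ in the role of the adjoined point $y$. Recalling that ``$\tau$ is finer than or equal to $\tau'$'' means $\tau\supseteq\tau'$, part (ii) (the order topology is finer than or equal to the circle topology) reads $\mathfrak A_2\subseteq\mathfrak A_1$, while part (iii) (the circle topology is finer than or equal to the one point compactification) reads $\mathfrak A_3\subseteq\mathfrak A_2$. Concatenating these gives $\mathfrak A_3\subseteq\mathfrak A_2\subseteq\mathfrak A_1$, so only the strictness of the second inclusion and the stated equivalence remain.

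To see that $\mathfrak A_2\subsetneq\mathfrak A_1$, I would invoke the ``moreover'' clause of Proposition~\ref{12}(ii): the order and circle topologies coincide precisely when $\Gamma$ has a smallest element. As $\Gamma$ is a nontrivial ordered abelian group, it has a positive element $\alpha$, and then $\gamma-\alpha<\gamma$ for every $\gamma\in\Gamma$; thus $\Gamma$ has no smallest element and the inclusion $\mathfrak A_2\subseteq\mathfrak A_1$ is proper.

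For the equivalence, I would apply the ``moreover'' clause of Proposition~\ref{12}(iii): $\mathfrak A_3=\mathfrak A_2$ if and only if $\Gamma$ is complete. It then suffices to identify completeness with the condition $\Gamma\cong\R$. The backward direction is immediate, since $\R$ is complete and completeness is preserved by order isomorphism; the forward direction is exactly the quoted fact that a complete ordered abelian group is isomorphic to the real numbers. Combining the two equivalences yields $\mathfrak A_3=\mathfrak A_2$ if and only if $\Gamma\cong\R$.

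The one step carrying real content, and the place I would be most careful, is the forward direction of the last equivalence, namely the classification of complete ordered abelian groups. The underlying argument is that completeness forces the Archimedean property (were it to fail for some $\alpha>0$, the set $\{n\alpha\mid n\in\N\}$ would be bounded above yet have no supremum), Hölder's theorem then embeds $\Gamma$ order-preservingly in $\R$, and no proper dense subgroup of $\R$ is complete. I would note that, taken literally, the definition of completeness also admits the discrete group $\Z$, for which $\mathfrak A_2=\mathfrak A_3$ although $\Z\not\cong\R$; so to match the statement one should read the quoted fact in the densely ordered setting, or else exclude the discrete case by hand.
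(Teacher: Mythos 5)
Your proof follows exactly the paper's route: the corollary is stated there as an immediate consequence of Proposition \ref{12} together with the quoted classification of complete ordered abelian groups, which is precisely your argument (the paper supplies no further detail, not even the observation that a nontrivial ordered group has no smallest element, which you rightly make explicit). Your closing caveat is not merely pedantic but identifies a genuine defect in the statement as printed: under the paper's definition of completeness, $\Z$ is complete (every nonempty bounded-above subset has a maximum), so Proposition \ref{12}(iii) yields $\mathfrak A_2=\mathfrak A_3$ for $\Gamma=\Z$ even though $\Z\not\simeq\R$, and hence the ``only if'' direction fails in the discrete case; your proposed repair --- restricting the quoted fact to densely ordered groups, equivalently excluding the case where $\Gamma$ has a smallest positive element, or invoking the full classification of Dedekind-complete ordered abelian groups as $\{0\}$, $\Z$, or $\R$ --- is exactly what is needed, and here you are more careful than the paper itself.
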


\begin{Lema}\label{11}
We have the following:
\begin{description}
\item[(a)] Properties \mbox{\rm\textbf{(P1)}} and \mbox{\rm\textbf{(P2)}} hold for $\mathfrak A_1$;
\item[(b)] Properties \mbox{\rm\textbf{(P1)}} and \mbox{\rm\textbf{(P2)}} are satisfied neither by $\mathfrak A_2$ nor by $\mathfrak A_3$.
\end{description}
\end{Lema}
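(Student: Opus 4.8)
The plan is to prove the two parts separately, and within part \textbf{(a)} to verify \textbf{(P2)} and then \textbf{(P1)} for the order topology $\mathfrak A_1$.

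For \textbf{(P2)} on $\mathfrak A_1$, given $\gamma<\gamma'$ in $\Gamma_\infty$ I would distinguish two cases. If some $\delta$ satisfies $\gamma<\delta<\gamma'$, take $U=\{x\mid x<\delta\}$ and $U'=\{x\mid x>\delta\}$; both are open in the order topology, contain $\gamma$ and $\gamma'$ respectively, and clearly $U<U'$. If instead $\gamma'$ is the immediate successor of $\gamma$, take $U=\{x\mid x<\gamma'\}$ and $U'=\{x\mid x>\gamma\}$; then any $u\in U$ satisfies $u\le\gamma$ and any $u'\in U'$ satisfies $u'\ge\gamma'$, so again $U<U'$. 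Note that $\gamma'=\infty$ always falls into the first case, since $\Gamma$ has no largest element, so the successor case only involves finite elements. This disposes of \textbf{(P2)}.

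For \textbf{(P1)} on $\mathfrak A_1$, i.e.\ continuity of addition, the underlying observation is that an ordered abelian group is either discrete or densely ordered: if it has no smallest positive element, then for $x<y$ one finds $0<\delta<y-x$ and hence $x<x+\delta<y$. In the discrete case every singleton of $\Gamma$ is open and continuity at finite points is immediate. In the dense case, given $\gamma_1,\gamma_2\in\Gamma$ and a basic neighbourhood $(c,d)$ of $\gamma_1+\gamma_2$, I would use density to pick positive $\delta_1,\delta_2$ small enough that $V_i=(\gamma_i-\delta_i,\gamma_i+\delta_i)$ satisfies $V_1+V_2\subseteq(c,d)$; this reduces to arranging $(\gamma_1-\delta_1)+(\gamma_2-\delta_2)>c$ and the symmetric upper estimate, possible because $c<\gamma_1+\gamma_2<d$. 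Continuity at points involving $\infty$ I would check directly: for a neighbourhood $\{x\mid x>x_0\}$ of $\gamma_1+\infty=\infty$ with $\gamma_1\in\Gamma$, a bounded neighbourhood $V_1\subseteq(\gamma_1',\gamma_1'')$ of $\gamma_1$ together with $V_2=\{x\mid x>x_0-\gamma_1'\}$ works, and at $(\infty,\infty)$ one takes $V_1=V_2=\{x\mid x>\max(0,x_0)\}$. The main technical obstacle in the whole lemma is precisely this interplay between possible jumps in $\Gamma$ and the element $\infty$; the discrete/dense dichotomy above is what makes the finite part uniform, while the $\infty$ estimates are handled by hand.

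For part \textbf{(b)}, the key point is a single structural feature shared by $\mathfrak A_2$ and $\mathfrak A_3$: every open neighbourhood of $\infty$ is, together with $\infty$, the complement of a bounded set. For the circle topology this is the defining form $\{\infty\}\cup X\setminus[x_0,x_1]$; for the one point compactification it follows because a compact subset of $\Gamma$ is bounded, as already used in the proof of Proposition \ref{12}\,(iii). Hence any neighbourhood of $\infty$ contains arbitrarily small (very negative) and arbitrarily large (very positive) finite elements. This immediately kills \textbf{(P2)}: taking $\gamma\in\Gamma$ and $\gamma'=\infty$, any neighbourhood $U'$ of $\infty$ contains some $u'<\gamma$, so, since $\gamma\in U$, one never has $U<U'$. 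It also kills \textbf{(P1)} at $(\infty,\infty)$: choosing $U=\Gamma_\infty\setminus\{0\}$, which is open in both topologies since $\{0\}$ is compact, for any neighbourhoods $V_1,V_2$ of $\infty$ one picks $v_1\in V_1$ negative enough that $-v_1\in V_2$, whence $v_1+(-v_1)=0\notin U$ and $V_1+V_2\not\subseteq U$. Thus neither $\mathfrak A_2$ nor $\mathfrak A_3$ satisfies \textbf{(P1)} and \textbf{(P2)}.
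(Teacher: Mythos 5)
Your proposal is correct and follows essentially the same route as the paper's proof: the same two-case construction for \textbf{(P2)}, the same discrete/dense dichotomy and interval arithmetic (including the separate treatment of points involving $\infty$) for \textbf{(P1)}, and for part \textbf{(b)} the identical counterexamples, namely $U=\Gamma_\infty\setminus\{0\}$ together with co-bounded neighbourhoods of $\infty$ to defeat \textbf{(P1)}, and the fact that every neighbourhood of $\infty$ contains elements below any given $\gamma$ to defeat \textbf{(P2)}. The only (welcome) refinement is your explicit remark that $\gamma'=\infty$ always falls into the dense case of \textbf{(P2)}, a point the paper leaves implicit.
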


\begin{proof}
\textbf{(a)} Take $\gamma,\gamma'\in \Gamma$ such that $\gamma<\gamma'$. If there is an element $\alpha\in\left]\gamma,\gamma'\right[$ we take
\[
U=\left]-\infty,\alpha\right[\textrm{ and }U'=\left]\alpha,\infty\right[.
\]
If $\left]\gamma,\gamma'\right[=\emptyset$ we take
\[
U=\left]-\infty,\gamma'\right[\textrm{ and }U'=\left]\gamma,\infty\right[.
\]
In each case we have that $\gamma\in U<U'\ni\gamma'$. Therefore, \textbf{(P2)} holds for $\mathfrak A_1$.

In order to show that \textbf{(P1)} holds we must show that for any $\gamma,\gamma'\in\Gamma_\infty$ and $U\in\mathfrak A_1$, if $\gamma+\gamma'\in U$, then there exist $V,V'\in\mathfrak A_1$ with $\gamma\in V$ and $\gamma'\in V'$ such that $V+V'\subseteq U$.

First, consider the case where $\gamma\neq\infty\neq\gamma'$. If the order topology is discrete we just take $V=\{\gamma\}$ and $V'=\{\gamma'\}$. In the other case, take $\alpha,\beta\in\Gamma$ with $\alpha,\beta>0$ such that
\[
\gamma+\gamma'\in \left]\gamma+\gamma'-\alpha,\gamma+\gamma'+\beta\right[\subseteq U.
\]
There exist $\alpha_1,\alpha_2,\beta_1,\beta_2\in\Gamma$ such that
\[
\alpha_1,\alpha_2,\beta_1,\beta_2>0\textrm{ and }\alpha_1+\alpha_2=\alpha\textrm{ and }\beta_1+\beta_2=\beta.
\]
Consider now the open sets
\[
V=\left]\gamma-\alpha_1,\gamma+\beta_1\right[\textrm{ and }V'=\left]\gamma'-\alpha_2,\gamma'+\beta_2\right[.
\]
Then,
\[
V+V'\subseteq \left]\gamma+\gamma'-\alpha_1-\alpha_2,\gamma+\gamma'+\beta_1+\beta_2\right[\subseteq U.
\]

It remains to prove that given any open neighbourhood $U$ of $\infty$ and any $\gamma\in\Gamma_\infty$ there exist $V,V'\in\mathfrak A_1$ with $\infty\in V$ and $\gamma\in V'$ such that $V+V'\in U$. Since $U$ is a neighbourhood of $\infty$ there exists $\alpha\in \Gamma$ such that $\{\alpha'\in\Gamma_\infty\mid \alpha'>\alpha\}\subseteq U$. If $\gamma=\infty$ we just take
\[
V=\{\alpha'\in\Gamma_\infty\mid\alpha'>\alpha\} \textnormal{ and }V'=\{\alpha'\in\Gamma_\infty\mid\alpha'>0\}
\]
and if $\gamma\neq\infty$ we just take any $\beta>0$ and define $V=\{\alpha'\in\Gamma_\infty\mid\alpha'>\alpha-\gamma+\beta\}$ and $V'=\{\alpha'\in\Gamma_\infty\mid\alpha'>\gamma-\beta\}$. In any case we have that $V+V'\subseteq U$.

\textbf{(b)} To prove that \textbf{(P1)} does not hold for $\mathfrak A_2$ and $\mathfrak A_3$ we just have to observe that in each case the set
\[
U=\{\gamma\in\Gamma_\infty\mid\gamma\neq 0\}
\]
is an open neighbourhood of $\infty$. On the other hand, if $V,V'$ are open neighbourhoods of $\infty$, then there exists $\gamma\in\Gamma$ such that $\gamma\in V$ and $-\gamma\in V'$. Therefore, $V+V'\not\subseteq U$.
 
Take an element $\gamma\in \Gamma$ (hence $\gamma<\infty$ in $\Gamma_\infty$) and an open neighbourhood $U$ of $\infty$ in either $\mathfrak A_2$ or $\mathfrak A_3$. From the definition of the topologies $\mathfrak A_2$ and $\mathfrak A_3$ we have that there exists an element $\gamma'\in U$ such that $\gamma'<\gamma$. Therefore, Property \textbf{(P2)} cannot hold for $\mathfrak A_2$ or $\mathfrak A_3$.
\end{proof}

As a consequence of Theorem \ref{topvaltheo} and Lemma \ref{11} we obtain:
\begin{Cor}\label{Cor_4.3}
The set $\mathcal{W}_\Gamma$ is closed in $\left(\Gamma_\infty\right)^{R}$ if we take the order topology $\mathfrak A_1$ on $\Gamma_\infty$.
\end{Cor}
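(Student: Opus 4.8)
The plan is to derive this directly from the two ingredients named in its statement, Theorem \ref{topvaltheo} and Lemma \ref{11}. First I would observe that $\Gamma_\infty$ is trivially a submonoid of itself, so Theorem \ref{topvaltheo} may be applied in the special case $\Gamma'=\Gamma_\infty$. In that case the set $\mathcal{W}_{\Gamma'}$ appearing in the theorem is exactly $\mathcal{W}_\Gamma$, the set of all valuations of $R$ taking values in $\Gamma_\infty$, and the ambient product space $(\Gamma')^{R}$ is $(\Gamma_\infty)^{R}$. Thus the conclusion of the theorem, once its hypotheses are checked, is literally the assertion of the corollary.

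Next I would verify those hypotheses by invoking Lemma \ref{11}(a), which asserts precisely that properties \textbf{(P1)} and \textbf{(P2)} hold for the order topology $\mathfrak A_1$ on $\Gamma_\infty$. With the hypotheses of Theorem \ref{topvaltheo} thereby satisfied for $\mathfrak A=\mathfrak A_1$ and $\Gamma'=\Gamma_\infty$, the theorem yields that $\mathcal{W}_\Gamma$ is closed in $(\Gamma_\infty)^{R}$ with respect to the product topology, completing the argument.

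There is no genuine obstacle here, since all of the substantive work has been front-loaded into the theorem and the lemma; the corollary is a routine specialization. The only point deserving a moment's care is the identification $\Gamma'=\Gamma_\infty$: one must confirm both that taking the full ambient monoid is a legitimate instance of Theorem \ref{topvaltheo} (it is, as $\Gamma_\infty$ is a submonoid of $\Gamma_\infty$) and that the resulting statement about $\mathcal{W}_{\Gamma_\infty}$ coincides with the intended statement about $\mathcal{W}_\Gamma$, which it does by the notational conventions fixed in Section \ref{prelim}.
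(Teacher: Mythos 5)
Your proposal is correct and is exactly the paper's argument: the corollary is stated there as an immediate consequence of Theorem \ref{topvaltheo} (applied with $\Gamma'=\Gamma_\infty$) and Lemma \ref{11}(a), which supplies properties \textbf{(P1)} and \textbf{(P2)} for $\mathfrak A_1$. Nothing further is needed.
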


If \textbf{(P2)} holds, then $\mathfrak A$ is finer than the order topology. Indeed, take any $\gamma\in\Gamma_\infty$. For every $\gamma'<\gamma$ there exists an open set $U_{\gamma'}\in \mathfrak A$ such that $U_{\gamma'}<\gamma$. Therefore,
\[
\{\gamma'\in\Gamma_\infty\mid \gamma'<\gamma\}=\displaystyle\bigcup_{\gamma'<\gamma} U_{\gamma'}\in \mathfrak A.
\]
The case $\gamma'>\gamma$ is treated analogously. This proves that every subbasic open set of the order topology belongs to $\mathfrak A$, i.e., this topology is finer than the order topology.

We present now an example that shows that the property of being $T_1$ does not characterize the order topology among its coarser topologies.
\begin{Exa}
Take any element $\gamma\in\Gamma$. Define the topology $\mathfrak A\subsetneq\mathfrak A_1$ on $\Gamma_\infty$ by
\[
\mathfrak A=\{U\in\mathfrak A_1\mid (\gamma\notin U)\vee (\exists\gamma_1,\gamma_2\in\Gamma\textrm{ with }\left]-\infty,\gamma_1\right[\cup\left]\gamma_2,\infty\right[\subseteq U)\}.
\]
It is easy to check that this topology is $T_1$ (it can be even proved that $\mathfrak A$ is normal).
\end{Exa}

For some applications, for instance in algebraic geometry, the interesting valuations are those which are centered. This implies that such valuations take values in $\Gamma':=\left(\Gamma^{\geq 0}\right)_\infty$. We define the topologies $\mathfrak A'_1$, $\mathfrak A'_2$ and $\mathfrak A'_3$ on $\Gamma'$ analogously to the topologies $\mathfrak A_1$, $\mathfrak A_2$ and $\mathfrak A_3$ on $\Gamma_\infty$.

As a consequence of Lemma \ref{12} we obtain the following:

\begin{Prop}\label{proptopgamma}
The topologies $\mathfrak A'_1$ and $\mathfrak A'_2$ of $\Gamma'$ are equal. Also, $\mathfrak A'_3\subseteq \mathfrak A'_2$, and $\mathfrak A'_3=\mathfrak A'_2$ if and only if $\Gamma\simeq\R$.
\end{Prop}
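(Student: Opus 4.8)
The plan is to deduce everything from Proposition~\ref{12} applied to the totally ordered set $X=\Gamma^{\geq 0}$, for which $X_\infty=(\Gamma^{\geq 0})_\infty=\Gamma'$ and the topologies $\mathfrak A'_1,\mathfrak A'_2,\mathfrak A'_3$ are precisely the order, circle and one point compactification topologies on $X_\infty$.

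First I would settle $\mathfrak A'_1=\mathfrak A'_2$. The set $\Gamma^{\geq 0}$ has a smallest element, namely $0$, so part (ii) of Proposition~\ref{12} immediately gives that the order and circle topologies on $(\Gamma^{\geq 0})_\infty$ agree. Likewise, the inclusion $\mathfrak A'_3\subseteq\mathfrak A'_2$ is just part (iii) of Proposition~\ref{12}, which says the circle topology is always finer than or equal to the one point compactification.

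The heart of the argument is the equivalence $\mathfrak A'_3=\mathfrak A'_2\Leftrightarrow\Gamma\simeq\R$. By part (iii) of Proposition~\ref{12}, $\mathfrak A'_3=\mathfrak A'_2$ holds if and only if $X=\Gamma^{\geq 0}$ is complete. Together with the well-known fact (recalled before the Corollary) that a complete ordered abelian group is isomorphic to $\R$, the whole proposition reduces to the single claim that $\Gamma^{\geq 0}$ is complete if and only if $\Gamma$ is complete. One direction is routine: if $\Gamma$ is complete, then a nonempty bounded-above $S\subseteq\Gamma^{\geq 0}$ is bounded above in $\Gamma$, its supremum exists in $\Gamma$ and is $\geq 0$ because $S$ contains a nonnegative element, hence this supremum lies in $\Gamma^{\geq 0}$. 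For the converse I would use translation invariance of the order: given a nonempty $S\subseteq\Gamma$ bounded above, fix $s_0\in S$, pass to $T=S-s_0$ (which contains $0$ and is still bounded above), and set $T^+=T\cap\Gamma^{\geq 0}$. Since $0\in T$, the upper bounds of $T$ and of $T^+$ coincide and are all $\geq 0$, so $\sup T=\sup T^+$ once the latter exists; completeness of $\Gamma^{\geq 0}$ supplies $\sup T^+$, and translating back gives $\sup S=s_0+\sup T^+$, so $\Gamma$ is complete.

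I expect the only delicate point to be this last step, precisely the verification that the supremum of $T^+$ taken inside the cone $\Gamma^{\geq 0}$ is genuinely a supremum in all of $\Gamma$, i.e. that restricting to nonnegative elements loses no upper bound. This is exactly where the presence of $0\in T$ is essential, and it is the reason the translation making $S$ meet $\Gamma^{\geq 0}$ is performed first. Everything else is bookkeeping of upper bounds together with the fact that translation by $s_0$ is an order-automorphism of $\Gamma$.
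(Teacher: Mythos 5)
Your proof is correct and takes essentially the same route as the paper, which presents the proposition as an immediate consequence of Proposition~\ref{12} applied to $X=\Gamma^{\geq 0}$ (using that $0$ is a smallest element for the equality $\mathfrak A'_1=\mathfrak A'_2$, and part (iii) plus the stated fact about complete ordered abelian groups for the rest). The only step the paper leaves implicit --- that $\Gamma^{\geq 0}$ is complete if and only if $\Gamma$ is complete --- is exactly the point you isolate and verify, and your translation argument with $0\in T^+$ handles it correctly.
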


\begin{Cor}\label{14}
The set of all centered valuations of a local ring $R$ taking values in $\R_\infty$ is compact with respect to the product topology of $\left[0,\infty\right]^R$. (Here, $[0,\infty]=\{x\in \R_\infty\mid 0\leq x\leq\infty\}$ is endowed with the order topology).
\end{Cor}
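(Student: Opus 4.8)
The plan is to run exactly the classical compactness scheme described in the introduction: realize the relevant set of valuations as a subset of a power $X^{R}$ of a compact Hausdorff space $X$, invoke Tychonoff to make $X^{R}$ compact and Hausdorff, and then identify the set as a closed subset. Here every centered valuation satisfies $\nu(a)\geq 0$ for all $a\in R$, so it takes values in $\Gamma':=(\R^{\geq 0})_\infty=[0,\infty]$. Thus the natural ambient space is $X^{R}=[0,\infty]^{R}$ with $X=[0,\infty]$ carrying the order topology $\mathfrak A'_1$, and the set $\mathcal{W}^{\geq 0}$ of centered valuations sits inside $\mathcal{W}_{\Gamma'}\subseteq[0,\infty]^{R}$.

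First I would establish that $X=[0,\infty]$ is compact and Hausdorff. For $\Gamma=\R$ the monoid $\R^{\geq 0}$ is complete and has a least element, so Proposition \ref{proptopgamma} (which rests on Proposition \ref{12}) gives $\mathfrak A'_1=\mathfrak A'_2=\mathfrak A'_3$; that is, the order topology on $[0,\infty]$ coincides with the one point compactification of $\R^{\geq 0}=[0,\infty)$. Since a one point compactification is always compact, $\mathfrak A'_1$ is compact, and it is Hausdorff because it is an order topology. Tychonoff's theorem then makes $[0,\infty]^{R}$ compact and Hausdorff.

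Next I would cut out the valuations as a closed subset. The order topology $\mathfrak A'_1$ on $\Gamma'$ satisfies \textbf{(P1)} and \textbf{(P2)}: this is checked exactly as in Lemma \ref{11}(a), the argument for $\mathfrak A_1$ restricting verbatim to nonnegative values (as was already used to deduce Corollary \ref{Cor_4.3}). Hence Theorem \ref{topvaltheo}, applied with this $\Gamma'$, shows that $\mathcal{W}_{\Gamma'}$, the set of all valuations of $R$ valued in $[0,\infty]$, is closed in $[0,\infty]^{R}$. A closed subset of a compact space is compact, so $\mathcal{W}_{\Gamma'}$ is compact.

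The delicate point, and the step I expect to be the main obstacle, is the passage from ``valued in $[0,\infty]$'' to genuinely centered. The inequality $\nu(a)\geq 0$ for all $a$ is already encoded in the codomain, but centeredness demands the \emph{strict} inequality $\nu(a)>0$ for every $a\in\MI$. Writing $\MI=(m_1,\dots,m_k)$ (possible since $R$ is noetherian), this reduces to the finitely many conditions $\nu(m_i)>0$, each of which cuts out $\pi_{m_i}^{-1}((0,\infty])$, an \emph{open} rather than closed set. So the centered locus $\mathcal{W}^{\geq 0}$ is the intersection of the compact set $\mathcal{W}_{\Gamma'}$ with finitely many open sets, and the crux is to control the limits in the product topology so as to rule out a centered valuation degenerating to a non-centered one. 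I would address this by examining such limits directly, and, if the strict positivity is not preserved, by passing to the appropriate normalized subfamily of centered valuations on which it is; the genuine work lies in this convergence analysis rather than in the formal verification of the valuation axioms, which Theorem \ref{topvaltheo} already supplies.
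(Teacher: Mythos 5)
Your first three paragraphs are, modulo wording, exactly the paper's own proof: compactness of $[0,\infty]$ obtained by identifying the order topology with the one point compactification (the paper cites Proposition \ref{12}, you route through Proposition \ref{proptopgamma}), Tychonoff, and closedness of $\mathcal W_{\Gamma'}$ in $[0,\infty]^R$ from Theorem \ref{topvaltheo} with \textbf{(P1)} and \textbf{(P2)} checked as in Lemma \ref{11}(a). The step you single out as the ``delicate point'' is simply not present in the paper: its proof ends with ``$\mathcal W_{\Gamma'}$ is a closed subset of a compact space, hence compact,'' i.e.\ it tacitly reads ``centered valuations taking values in $\R_\infty$'' as ``valuations taking values in $[0,\infty]$,'' and never engages the strict inequality $\nu(a)>0$ for $a\in\MI$ required by the definition of centered in Section \ref{prelim}. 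So what you actually prove coincides with what the paper actually proves.

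You should know, however, that the convergence analysis you defer cannot be carried out, because centeredness is genuinely not preserved by limits in the product topology. Take $R=\C[[x,y]]$ with $\MI=(x,y)$ and, for $n\in\N$, let $\nu_n$ be the monomial valuation given on $f=\sum c_{ij}x^iy^j$ by $\nu_n(f)=\min\{i/n+j\mid c_{ij}\neq 0\}$; each $\nu_n$ is centered, since $\nu_n\geq 0$ and $\nu_n(a)\geq 1/n>0$ for $a\in\MI$. Writing $j_0=\min\{j\mid c_{ij}\neq 0\mbox{ for some }i\}$ and $i_0=\min\{i\mid c_{ij_0}\neq 0\}$, one has $j_0\leq\nu_n(f)\leq i_0/n+j_0$, so $\nu_n(f)\lra j_0$ for every $f$; hence $\nu_n$ converges in $[0,\infty]^R$ to the $y$-adic valuation $\nu_0$, which satisfies $\nu_0(x)=0$ and is therefore not centered. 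Thus the set of centered valuations is not closed in the Hausdorff space $[0,\infty]^R$, and consequently not compact: read literally, the corollary is false, and no completion of your fourth step exists. The correct statement---the one your first three paragraphs and the paper's proof establish---is that the set of \emph{non-negative} valuations $\mathcal W_{\Gamma'}$ is compact. Your instinct to pass to a normalized subfamily is the standard repair: the set $\mathcal W_\MI$ of centered valuations normalized by $\nu(\MI)=1$ is cut out of $\mathcal W_{\Gamma'}$ by the closed condition $\min_i\nu(m_i)=1$ (which in turn forces $\nu>0$ on $\MI$), hence is compact; this is precisely what the appendix uses for the valuative tree. But that is a smaller set than the one named in the corollary, so it is a different statement, not a proof of this one.
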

\begin{proof}
Theorem \ref{topvaltheo} gives us that the set of valuations on $R$ with values in the monoid $\R_\infty'$ is closed in $\left(\R_\infty'\right)^R=\left[0,\infty\right]^R$ with respect to the product topology. Also, Lemma \ref{12} guarantees that $\left[0,\infty\right]$ is compact, hence $\left[0,\infty\right]^R$ is compact. Since $\mathcal{W}_\R$ is a closed subset of a compact space it is compact.
\end{proof}

We do not know whether Corollary \ref{14} would remain true if we take $\Gamma=\R^n$ where $n>1$, i.e., we do not have an answer for the following problem:

\begin{Pro}
Take a topology $\mathfrak A$ on $\Gamma_\infty$ where $\Gamma=\R^n$. Is the corresponding topology on the set of all non-negative valuations taking values in $\left(\R^n\right)_\infty$ compact?
\end{Pro}

Our criteria developed so far cannot fully answer this question. Since Properties \textbf{(P1)} and \textbf{(P2)} hold for $\mathfrak A_1$ they also hold for $\mathfrak A'_2=\mathfrak A'_1$. Hence, $\mathcal W_{\Gamma'}$ is closed in $\left(\Gamma'\right)^R$ for the topology $\mathfrak A'_2=\mathfrak A'_1$ on $\Gamma'=\left((\R^n)^{\geq 0}\right)_\infty$. However, since $\mathfrak A'_2=\mathfrak A'_1$ is not compact (for $n>1$) we cannot conclude whether $\mathcal W_{\Gamma'}$ is compact or not. 

On the other hand, if we consider the compact topology $\mathfrak A'_3$ of $\Gamma'$, then Properties \textbf{(P1)} and \textbf{(P2)} do not hold for $\mathfrak A'_3$ and again we cannot conclude whether $\mathcal W_{\Gamma'}$ is compact or not. 

This problem, and its partial answers, will be discussed in a forthcoming paper.

\section{Appendix: Known topologies on spaces of valuations}

We start this appendix by describing an approach used by by Favre and Jonsson in \cite{Fav_1}, following Berkovich's ideas (see \cite{Ber}), to define topologies on sets of valuations. Take a noetherian local ring $R$ with maximal ideal $\MI$ and an ordered abelian group $\Gamma$.
\begin{Def}\label{Norval}
For each positive element $\gamma\in\Gamma$ we say that a centered valuation $\nu:R\lra\Gamma_\infty$ is \index{Valuation!normalized}\textbf{normalized by $\gamma$} if $\nu(\MI)=\gamma$.
\end{Def}

\begin{Def}\label{topvalcentnorm}
Consider the subset $\mathcal W_\MI$ of $\mathcal{W}_\R$ consisting of all centered valuations normalized by $1$. We define the weak topology on $\mathcal W_\MI$ to be the topology having as a subbasis the sets of the form
\[
\{\nu\in \mathcal W_\MI\mid \nu(a)>\alpha\}\textnormal{ and }\{\nu\in 
\mathcal W_\MI\mid \nu(a)<\alpha\}
\]
where $\alpha$ runs through $\R_\infty$ and $a$ runs through $R$.
\end{Def}

Observe that this is the restriction to $\mathcal W_\MI$ of the topology appearing in Corollary \ref{14}. It is easy to prove that $\mathcal W_\MI$ is closed in $[0,\infty]^R$ and hence the topology defined above is compact.

\begin{Obs}
If $R$ is a two-dimensional regular local ring, then $\mathcal W_\MI$ admits a tree structure, which will be called the valuative tree of $R$.
\end{Obs}

\begin{Obs}
The sets of the form
\[
\{r\in\R_\infty\mid r>\alpha\}\mbox{ and }\{r\in\R_\infty\mid r<\alpha\}\mbox{ with $\alpha$ running through }\R_\infty
\]
form a subbasis of open sets for the order topology on $\R_\infty$. The product topology on $\left(\R_\infty\right)^{R}$ is the weak topology associated to the projections into $\R_\infty$. Hence, the topology defined in \ref{topvalcentnorm} is the topology on $\mathcal W_\MI\subseteq \left(\R_\infty\right)^{R}$ induced by the product topology on $\left(\R_\infty\right)^{R}$.
\end{Obs}

The tree structure in $\mathcal W_\MI$ (for a two dimensional regular local ring) endows some interesting topologies in $\mathcal W_\MI$. For a more detailed discussion on that topic, we suggest \cite{Fav_1} and \cite{Nov1}.

So far we have been constructing topologies on sets of valuation. From now on, we will identify equivalent valuations. We fix an integral domain $R$ and denote by $\mathcal W$ and $\mathcal V$ the sets of valuations and Krull valuations, respectively, on $R$.

This means that the topologies will now be defined in the set of classes of valuations. We observe that any topology on the set of valuations induces canonically a topology on the classes of valuations (namely, the quotient topology).

\begin{Def}\label{Zar_top}
The \index{Topology!Zariski}\textbf{Zariski topology} on $\mathcal{V}$ is the topology generated by the sets of the form
\[
\{\nu\in \mathcal{V}\mid \nu(a)\geq 0\}
\]
where $a$ runs through $F=\QF(R)$. 
\end{Def}

The coarsest Hausdorff topology on $\mathcal{V}$ which is finer than the Zariski topology is the patch (also called constructive) topology:

\begin{Def}
The \index{Topology!patch} \textbf{patch topology} on $\mathcal{V}$ is defined to be the topology generated by the sets of the form
\[
\{\nu\in \mathcal{V}\mid \nu(a)\geq 0\}\mbox{ and }\{\nu\in \mathcal{V}\mid \nu(b)>0\}
\]
where $a$ and $b$ run through $F$.
\end{Def}

\begin{Def}
Take a set $X$ and a family $\mathcal F=\left\{(X_i,\mathfrak{A}_i,\Phi_i)\mid i\in I\right\}$ where for every $i\in I$, $(X_i,\mathfrak{A}_i)$ is a topological space and $\Phi_i:X\lra X_i$ is a function. We define the \index{Topology!weak}\textbf{weak topology on $X$ associated to $\mathcal F$} to be the coarsest topology which makes all the $\Phi_i$ continuous. It is equivalent to say that this topology is the topology having as a subbasis all sets of the form $\Phi_i^{-1}(U_i)$ with $U_i\in \mathfrak A_i$ and $i\in I$.

\end{Def}

For every $a\in F=\QF(R)$ let
\[
\pi_a:\mathcal{V}\lra \{0,-,+\}
\]
be the function given by
\[
\pi_a(\nu)=\begin{cases}
0&\textnormal { if }\nu(a)=0,\\ -& \textnormal{ if }\nu(a)<0,\\ + & \textnormal{ if } \nu(a)>0.
\end{cases}
\]
Endow $X:=\{0,-,+\}$ with the topologies
\[
\mathfrak A_1:=\{\emptyset,\{0,+\}, X\}\textnormal{ and }\mathfrak A_2:=\{\emptyset,\{+\},\{0,+\}, X\}.
\]
Then the Zariski topology is the weak topology on $\mathcal V$ induced by $\left\{(X,\mathfrak A_1, \pi_a)\mid a\in F\right\}$ and the patch topology is the weak topology on $\mathcal V$ induced by $\left\{(X,\mathfrak A_2, \pi_a)\mid a\in F\right\}$.

Since $X$ is finite, both topologies $\mathfrak A_1$ and $\mathfrak A_2$ are compact. Hence, in order to obtain the compactness of the Zariski and patch topologies, it is enough to show that the set of equivalence classes is closed in $X^F$. This is proved for instance in \cite{Zar}.

The way that the Zariski topology is constructed on $\mathcal V$ (Definition \ref{Zar_top}) cannot be applied to the set $\mathcal W$ of all equivalence classes of valuations. That is because $R$ does not need to be a domain. Even if $R$ is a domain we cannot guarantee that every valuation on $R$ can be extended to $F=\QF(R)$. To overcome this problem, Huber and Knebusch introduced the valuation spectrum topology (see \cite{Kne}).
\begin{Def}\label{spectopspval}
We define the \index{Topology!valuation spectrum}\textbf{valuation spectrum topology} on $\mathcal{W}$ as the topology having as a subbasis the sets
\[
\{\nu\in\mathcal{W}\mid \nu(a)\geq\nu(b)\neq\infty\}
\]
where $a$ and $b$ run through $R$.
\end{Def}

One can see that the restriction of the valuation spectrum topology from $\mathcal W$ to $\mathcal V$ is the Zariski topology.

\end{document}